\tikzstyle{vertex}=[circle, draw, inner sep=2pt, minimum size=6pt]
\newtheorem{observation}{Observation}
\newtheorem{thm} {\indent Theorem}
\def\zet{\mathop\mathbb{Z}\nolimits}
\title{Orientable $\mathbb{Z}_{n}$-distance magic regular graphs}
\author{ Paweł Dyrlaga, Karolina Szopa \\ AGH University of Science and Technology Kraków, Poland}
\begin{document}
\maketitle

\begin{abstract}
Hefetz, M\"{u}tze, and Schwartz  conjectured that every connected undirected graph admits an antimagic orientation \cite{ref_HefMutSch}. In this paper we support  the analogous question for distance magic labeling. Let $\Gamma$ be an Abelian group of order $n$. A \textit{directed $\Gamma$-distance magic labeling} of an oriented graph $\vec{G} = (V,A)$ of order $n$ is a bijection $\vec{l}:V \rightarrow \Gamma$ with the property that there is a \textit{magic constant} $\mu \in \Gamma$
such that for every $x \in V(G)$
$
w(x) = \sum_{y \in N^{+}(x)}\vec{l}(y) - \sum_{y \in N^{-}(x)} \vec{l}(y) = \mu.
$ 
In this paper we provide an infinite family of odd regular graphs possessing an orientable $\zet_n$-distance magic labeling. Our results refer to lexicographic product of graphs. We also present a family of odd regular graphs that are not orientable $\mathbb{Z}_{n}$-distance magic. 
\end{abstract}

\section{Introduction}

Consider a simple graph $G$ and a simple oriented graph  $\vec{G}$. We denote by $V(G)$ the vertex set and by $E(G)$ the edge set of $G$. For $\vec{G}$ we denote by $V(\vec{G})$ the vertex set and by $A(\vec{G})$ the arc set of $\vec{G}$. We denote the order of $G$ by $|V(G)|$. An arc $\overrightarrow{xy}$ is considered to be directed from $x$ to $y$, moreover $y$ is called the $head$ and $x$ is called the $tail$ of the arc.
For a vertex $x$, the set of head endpoints adjacent to $x$ is denoted by $N^{-}(x)$, and the set of tail endpoints adjacent to $x$ is denoted by $N^{+}(x)$. For graph theory notations and terminology not described in this paper, the readers are referred to \cite{Har}.

In this paper we investigate distance magic labelings, which belong to a large family of magic-type labelings.  Probably the best known problem in  the area of magic and antimagic labelings is the {\em antimagic conjecture} by Hartsfield and Ringel~\cite{HarRin}, which claims that the edges of every graph except $K_2$ can be labeled by integers $1,2,\dots,|E|$ so that the weight of each vertex is different. The conjecture is still open. Twenty years later Hefetz, M\"{u}tze, and Schwartz introduced the variation of antimagic labelings, i.e.,
antimagic labelings on directed graphs. Moreover, they conjectured that every connected undirected graph admits an antimagic orientation \cite{ref_HefMutSch}. The papers \cite{CicFreFro,FreKer, FreKer2} stated the analogous question for distance magic labeling, namely when a graph $G$ of order $n$ has a $\zet_n$-distance magic orientation. 

Formally speaking, a \textit{directed $\Gamma$-distance magic labeling} of an oriented graph $\vec{G} = (V,A)$ of order $n$ is a bijection $\vec{l}:V \rightarrow \Gamma$ with the property that there is a \textit{magic constant} $\mu \in \Gamma$
such that for every $x \in V(G)$

$$
w(x) = \sum_{y \in N^{+}(x)}\vec{l}(y) - \sum_{y \in N^{-}(x)} \vec{l}(y) = \mu,
$$ 
where the sum is taken in the group $\Gamma$, and instead of writing $a+(-b)$ we use a $a-b$ notation. 

If for a graph $G$ there exists an orientation $\vec{G}$ such that there is a directed $\Gamma$-distance magic labeling $\vec{l}$ for $\vec{G}$, we say that $G$ is \textit{orientable $\Gamma$-distance magic} and the directed $\Gamma$-distance magic labeling $\vec{l}$ we call an \textit{orientable $\Gamma$-distance magic labeling}.

Cichacz, Freyberg and Froncek proved the following theorems.

\begin{thm}[\cite{CicFreFro}]\label{gr:odd1}Let $G$ have order $n\equiv2\pmod4$ and all vertices of odd degree.
There does not exist an orientable $\zet_n$-distance magic labeling
of $G$.
\end{thm}

\begin{thm}[\cite{CicFreFro}]\label{complete} The complete graph  $K_n$ is orientable $\mathbb{Z}{}_{n}$-distance magic if and only if $n$ is odd.
\end{thm}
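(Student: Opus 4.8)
The plan is to treat the two implications separately: a short parity obstruction handles the ``only if'' direction for \emph{all} even $n$ at once (which is stronger than, and logically independent of, Theorem~\ref{gr:odd1}, since that theorem only excludes $n\equiv 2\pmod 4$), and an explicit rotational construction handles the ``if'' direction for odd $n$.

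For necessity I would argue by contradiction. Suppose $n$ is even and $\vec l$ is a directed $\mathbb{Z}_n$-distance magic labeling of some orientation of $K_n$ with magic constant $\mu$. Since $K_n$ is complete, for every vertex $x$ the sets $N^{+}(x)$ and $N^{-}(x)$ partition $V\setminus\{x\}$. Writing $P(x)=\sum_{y\in N^{+}(x)}\vec l(y)$ and $M(x)=\sum_{y\in N^{-}(x)}\vec l(y)$, the magic condition reads $P(x)-M(x)=\mu$, while $P(x)+M(x)=S-\vec l(x)$, where $S=\sum_{g\in\mathbb{Z}_n}g$ is a fixed constant. Adding these gives the key relation $2P(x)=\mu+S-\vec l(x)$. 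The left-hand side always lies in the subgroup $2\mathbb{Z}_n$, which for even $n$ has index $2$ (its $n/2$ elements are exactly the even residues), whereas the right-hand side runs over \emph{all} of $\mathbb{Z}_n$ as $x$ varies, because $\mu+S$ is fixed and $\vec l$ is a bijection. Hence for the $n/2$ vertices $x$ with $\mu+S-\vec l(x)\notin 2\mathbb{Z}_n$ the relation has no solution $P(x)$, a contradiction. I want to stress that this disposes of $n\equiv 0\pmod 4$ as well as $n\equiv 2\pmod 4$, and that the right invariant is the relation $2P(x)=\mu+S-\vec l(x)$ itself, not any divisibility property of $S$ or degree-sum identity.

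For sufficiency I would give an explicit construction. Label the vertices $v_0,v_1,\dots,v_{n-1}$ by $\vec l(v_i)=i\in\mathbb{Z}_n$, and orient the edge joining $v_i$ and $v_j$ from $v_i$ to $v_j$ exactly when $(j-i)\bmod n\in\{1,2,\dots,(n-1)/2\}$. Because $n$ is odd, no nonzero element of $\mathbb{Z}_n$ is its own negative, so for each unordered pair precisely one of the two differences lies in $\{1,\dots,(n-1)/2\}$; thus the orientation is well defined (a rotational tournament), with $N^{+}(v_i)=\{v_{i+1},\dots,v_{i+(n-1)/2}\}$ and $N^{-}(v_i)=\{v_{i-1},\dots,v_{i-(n-1)/2}\}$, indices taken mod $n$. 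The weight then telescopes: $w(v_i)=\sum_{d=1}^{(n-1)/2}\bigl[(i+d)-(i-d)\bigr]=\sum_{d=1}^{(n-1)/2}2d=\tfrac{n^2-1}{4}\pmod n$, which is independent of $i$, so $\mu=\tfrac{n^2-1}{4}\bmod n$ witnesses that $K_n$ is orientable $\mathbb{Z}_n$-distance magic (e.g.\ for $n=3$ this is the directed triangle with $\mu=2$).

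As for difficulty, the construction side is essentially routine once the rotational orientation is written down, the only genuine content being the cancellation of the $i$-terms that makes the weight vertex-independent. The conceptual core is the necessity argument, and specifically the observation that $2P(x)$ is confined to the index-two subgroup $2\mathbb{Z}_n$ while the forced target value $\mu+S-\vec l(x)$ sweeps out the whole group; this single parity obstruction is what settles all even orders uniformly.
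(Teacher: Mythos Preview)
The paper does not supply its own proof of this theorem: it is quoted verbatim from \cite{CicFreFro} and used only as a black box (to dispose of the case $n=1$ in the proof of Theorem~5). There is therefore no in-paper argument to compare your proposal against.

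That said, your proof is correct and self-contained. The necessity argument via $2P(x)=\mu+S-\vec l(x)$ is clean and, as you point out, handles all even $n$ uniformly rather than leaning on Theorem~\ref{gr:odd1} (which would only kill $n\equiv 2\pmod 4$). The rotational-tournament construction for odd $n$ is the natural one and the telescoping computation is right. One minor cosmetic point: the paper's convention has $N^{+}(x)$ denoting tails of arcs incident with $x$ (i.e.\ in-neighbours) and $N^{-}(x)$ heads (out-neighbours), which is the reverse of the more common usage your write-up implicitly adopts; this only flips the sign of the constant $\mu$ and does not affect the validity of the argument.
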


\begin{thm}[\cite{CicFreFro}]\label{cff}
Let $G = K_{n_{1},n_{2}, \ldots, n_{k}}$ be a complete $k$-partite graph such that $1 \leq n_{1} \leq n_{2} \leq \ldots \leq n_{k}$ and $n = n_{1}+n_{2}+ \ldots + n_{k}$ is odd. The graph $G$ is orientable $\mathbb{Z}_{n}$-distance magic graph if $n_{2} \geq 2$.
\end{thm}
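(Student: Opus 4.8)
The plan is to decouple the choice of orientation from the choice of labeling, exploiting the fact that in a complete multipartite graph every vertex of a part $V_i$ has exactly the same neighbourhood, namely $V \setminus V_i$. First I would fix a very rigid orientation: pick any tournament $T$ on the $k$ parts, and whenever $V_i \to V_j$ in $T$ orient \emph{every} edge of the complete bipartite graph between $V_i$ and $V_j$ from $V_i$ to $V_j$. Writing $\sigma_i = \sum_{v \in V_i}\vec{l}(v)$, under this orientation, for each $x \in V_i$, the two directed neighbour-sums $\sum_{y \in N^{+}(x)}\vec{l}(y)$ and $\sum_{y \in N^{-}(x)}\vec{l}(y)$ are each a sum of certain part-sums $\sigma_j$ (those $V_j$ joined to $V_i$ by out-arcs, respectively in-arcs), and in particular do not depend on the individual vertex $x \in V_i$. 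Hence $w(x)$ is automatically constant on every part, and the whole problem collapses to making these $k$ part-values coincide.

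The cleanest way to force coincidence is to arrange $\sigma_i = 0$ for every $i$. Then both directed neighbour-sums vanish for every vertex, so $w(x) = 0$ for all $x$ and the magic constant is $\mu = 0$, \emph{whatever} tournament $T$ was chosen. Thus I would reduce the theorem to the following purely additive statement: under the hypotheses ($n$ odd, $n_1 \le \dots \le n_k$, $n_2 \ge 2$), the group $\zet_n$ can be partitioned into blocks of the prescribed sizes $n_1,\dots,n_k$, each block summing to $0$.

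For the partition I would use the pairs $\{j,-j\}$ for $1 \le j \le (n-1)/2$, each of which sums to $0$, together with the singleton $\{0\}$. Even-sized blocks are built as disjoint unions of such pairs. The hypothesis $n_2 \ge 2$ guarantees that at most one block has size $1$; if such a block occurs I set it equal to $\{0\}$. Since the sum of the sizes is the odd number $n$, the number of odd-sized blocks is odd; so after placing $0$ (either as the lone singleton, or inside one odd block, which then reads $\{0\}$ together with a union of pairs) an even number of odd blocks of size $\ge 3$ remain, and these I would realize using zero-sum triples $\{a,b,-(a+b)\}$ of distinct non-zero elements, padded out with further pairs.

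The hard part will be this last step: producing the required family of \emph{pairwise disjoint} zero-sum triples of distinct non-zero elements simultaneously with the pair-packing of the even blocks, and disposing of the few small orders (notably $n=5$) where zero-sum triples are scarce and must be checked directly. The orientation half of the argument is essentially free; the entire content lies in this prescribed-size zero-sum partition of $\zet_n$, and it is precisely here that the condition $n_2 \ge 2$ is forced, since two blocks of size $1$ would both have to equal $\{0\}$.
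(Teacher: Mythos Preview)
The paper does not prove this statement: Theorem~\ref{cff} is quoted from \cite{CicFreFro} and used only as a black box in the proof of Theorem~5, so there is no argument in the present paper to compare your proposal against.

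Your strategy is nonetheless the standard one and is sound. The uniform tournament orientation correctly reduces the problem to a zero-sum partition of $\mathbb{Z}_n$ with prescribed block sizes, and your parity bookkeeping (an odd number of odd-sized blocks, at most one singleton thanks to $n_2\ge 2$) is accurate. The one place requiring genuine care is exactly where you flag it: the disjoint zero-sum triples cannot be manufactured \emph{after} the $\pm$-pairs for the other blocks have been committed. For instance, in $\mathbb{Z}_{11}$ with block sizes $5,3,3$, if you first take the $5$-block to be $\{0,1,10,5,6\}$, the remaining set $\{2,3,4,7,8,9\}$ contains no zero-sum triple at all; one must instead select the triples first (here $\{1,4,6\}$ and its negative $\{5,7,10\}$, consuming the pairs $\{1,10\},\{4,7\},\{5,6\}$) and only then fill the $5$-block with $\{0,2,9,3,8\}$. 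The general existence of $s$ such mutually negative pairs of zero-sum triples whenever $6s+1\le n$ (which your size constraints guarantee) is a Skolem/Langford-type statement and is part of the zero-sum partition literature, in the same spirit as Theorem~\ref{set} for the even case. With that lemma supplied, your outline is complete.
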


In this paper we consider two out of four standard graph products \linebreak (see \cite{HIK}). The \textit{Cartesian product} $G {\displaystyle \square } H$ of graphs $G$ and $H$ is a graph such that the vertex set of $G {\displaystyle \square } H$ is the Cartesian product $V(G) \times V(H)$ and any two vertices $(g,h)$ and $(g',h')$ are adjacent in $G {\displaystyle \square } H$ if and only if either $g=g'$ and $h$ is adjacent with $h'$ in $H$ or $h=h'$ and $g$ is adjacent with $g'$ in $G$.

The \textit{lexicographic product} $G \circ H$ is a graph with the vertex set \linebreak $V(G) \times V(H)$. Two vertices $(g,h)$ and $(g',h')$ are adjacent in $G \circ H$ if and only if either $g$ is adjacent with $g'$ in $G$ or $g = g'$ and $h$ is adjacent to $h'$ in $H$. $G\circ H$ is also \textit{called the composition of graphs} $G$ and $H$ and denoted by $G[H]$ (see \cite{Har}).

We will also discuss join graphs. We say that $G$ is a \textit{join graph }if $G$ is the complete union of two graphs
$G_1 = (V_1, E_1)$ and $G_2 = (V_2, E_2)$. In other words, $V = V_1 \cup V_2$ and $E = E_1 \cup E_2 \cup \{uv : u \in V_1, v \in V_2\}$. If $G$ is
the join graph of $G_1$ and $G_2$, we shall write $G = G_1 + G_2$.

Freyberg and Keranen proved recently.
\begin{thm}[\cite{FreKer2}]\label{FreKer2}
If $H$ is an orientable $\mathbb{Z}{}_{m}$-distance magic graph of order $m$ and $n\not \equiv 2 \pmod 4$, then the
lexicographic product $G = H \circ \overline{K_{n}}$ is orientable  $\mathbb{Z}{}_{mn}$-distance magic.
\end{thm}

So far there was known only one example of odd regular graph  orientable $\mathbb{Z}_{n}$-distance magic (\cite{CicFreFro}). Note that by Theorems~\ref{complete} and \ref{FreKer2} for $m$ odd and $n\not \equiv 2 \pmod 4$ the product  
$K_m \circ \overline{K_{n}}\cong K_{\underbrace{n,n,\ldots,n}_{m}} $ 
is orientable $\zet_{mn}$-distance magic.
In Section~\ref{complete}  we give necessary and sufficient conditions for $K_m\circ \overline{K_{n}}$ being orientable $\zet_{mn}$-distance magic. As a consequence, we provide an infinite family of odd regular graphs possessing directed $\zet_n$-distance magic labeling. Moreover in Section~\ref{join} we will show one more example of a family of graphs that is orientable $\mathbb{Z}{}_{n}$-distance magic if and only if $n \not \equiv 2 \pmod 4$. In the last section we present some family of odd regular graphs that are not orientable $\mathbb{Z}{}_{n}$-distance magic. Before we proceed, we will need the following theorem.
\begin{thm}[\cite{Zeng}]\label{set}
Let $n = r_1 + r_2 + \ldots + r_q$ be a partition of the positive even  integer $t$, where $r_i\geq 2$  for $i = 1,2,\ldots,q$. Let $A = \{-t,-(t-1),\ldots,-1,1,\ldots,(t-1),t\}$. Then the set $A$ can be partitioned into pairwise disjoint subsets $A_1,A_2,\ldots,A_q$ such that for every $1 \leq i \leq q$, $|A_i| = r_i$ with
$\sum_{a\in A_i} a =0$.
\end{thm}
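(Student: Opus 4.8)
The plan is to reduce the whole partition problem to the construction of zero-sum \emph{pairs} and \emph{triples}, and then to isolate the one genuinely combinatorial step. First observe that a disjoint union of zero-sum sets is again zero-sum. Hence if I can partition $A$ into zero-sum blocks each of size $2$ or $3$, I can glue them together to realize any prescribed block sizes $r_i\ge 2$: a part of even size $r_i$ is assembled from $r_i/2$ pairs, and a part of odd size $r_i\ge 3$ from one triple together with $(r_i-3)/2$ pairs. Write $N=n/2$, so $A=\{\pm1,\dots,\pm N\}$ and $|A|=n$. Since $\sum_i r_i=n$ is even, the number of odd parts is even; call it $2s$. A direct count shows the glueing requires exactly $2s$ triples and $(n-6s)/2$ pairs, and $6s\le n$ because each odd part is at least $3$. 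So it suffices to partition $A$ into $2s$ zero-sum triples and $(n-6s)/2$ zero-sum pairs.

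Second, I reduce the pairs away. Put $S=\{\pm1,\pm2,\dots,\pm 3s\}$, which is a subset of $A$ since $3s\le N$. The complement $A\setminus S=\{\pm(3s+1),\dots,\pm N\}$ is symmetric, so it splits into the $(n-6s)/2$ pairs $\{k,-k\}$ automatically. Thus the entire theorem comes down to the following statement, which I expect to be the main obstacle: for every $s\ge 1$, the set $\{\pm1,\dots,\pm 3s\}$ can be partitioned into $2s$ triples each summing to $0$.

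To attack this I would first note that any zero-sum triple inside $\{\pm1,\dots,\pm 3s\}$ has three distinct magnitudes $a<b<c$ with $c=a+b$, the signs being forced to the pattern $\{a,b,-c\}$ or its negative (no two elements can be $\pm k$, else the third would be $0$, and the largest magnitude must equal the sum of the other two). So the problem has two layers: (i) choose a family of magnitude triples $\{a,b,a+b\}$ covering each value $1,\dots,3s$ exactly twice, and (ii) assign to each chosen triple one of the two sign patterns so that the two occurrences of every magnitude receive opposite signs. When $s\equiv 0,1\pmod 4$ I would invoke the classical partition of $\{1,\dots,3s\}$ into $s$ triples with $a+b=c$ and take it together with its negative, which makes layer (ii) automatic. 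For $s\equiv 2,3\pmod 4$ the quantity $\sum_{k=1}^{3s}k=3s(3s+1)/2$ is odd, so no symmetric partition of $\{1,\dots,3s\}$ into sum-triples exists and a genuinely asymmetric family is forced; here I would supply an explicit global construction (the cases $s=2,3$ already exhibit the typical pattern) and then verify layer (ii) by checking that the induced ``occurrence'' constraints form a consistent $2$-colouring.

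The heart of the argument is exactly this triple partition for $s\equiv 2,3\pmod 4$. The difficulty is structural: the large magnitudes cannot combine among themselves, since any two of them sum to more than $3s$, so the construction cannot be built recursively by adjoining blocks of new magnitudes and must instead be written down globally. Once such a magnitude family is fixed, the remaining and more delicate point is confirming the sign-consistency of layer (ii); this is where I expect the real work to lie, and everything else in the proof is bookkeeping around the two reductions above.
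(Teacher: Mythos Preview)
The paper does not prove this statement: Theorem~\ref{set} is quoted from Zeng and used purely as a black box in the proof of Theorem~5, so there is no in-paper argument to compare your proposal against.

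Assessing your plan on its own merits: the two reductions are correct and cleanly executed. The parity count giving $2s$ odd parts with $6s\le n$, the assembly of each $r_i$ from one triple (if $r_i$ is odd) plus opposite pairs, and the observation that the symmetric complement $A\setminus\{\pm1,\dots,\pm3s\}$ falls apart into pairs $\{k,-k\}$ automatically, together reduce the theorem exactly to the triple-partition of $\{\pm1,\dots,\pm3s\}$. Your treatment of $s\equiv 0,1\pmod 4$ via a Skolem-type partition of $\{1,\dots,3s\}$ taken together with its negative is also correct, including the parity obstruction $3s(3s+1)/2$ odd that rules this out for $s\equiv 2,3\pmod 4$.

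The genuine gap is the one you name yourself: for $s\equiv 2,3\pmod 4$ you promise an explicit magnitude family and a verification of the sign-consistency (your layer~(ii)) but supply neither. This is not a small hole---it is precisely the substantive content of the result in those residue classes, and everything else really is bookkeeping, as you say. Note also that the extremal case where every $r_i=3$ forces $N=3s$ exactly, so you cannot sidestep the difficulty by borrowing larger magnitudes; the construction must live inside $\{1,\dots,3s\}$. As written, then, the proposal is a correct and well-organised outline rather than a proof. To close it you would need to exhibit, for each $s\equiv 2,3\pmod 4$, a concrete list of $2s$ sum-triples covering every magnitude in $\{1,\dots,3s\}$ exactly twice, together with a consistent $\pm$ assignment; this is exactly the kind of explicit construction that Zeng's paper carries out.
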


\section{Lexicographic products}

Consider a graph $G = K_m \circ \overline{K_{n}}$. Let us denote independent sets of vertices by $V^{1}, V^{2}, \ldots, V^{m}$ where $V^{i} = \{v_{1}^{i}, v_{2}^{i}, \ldots, v_{n}^{i} \}$ for  $i \in \{1,2,\ldots, m\}$. We give necessary and sufficient conditions
for $K_m \circ \overline{K_n}$ being orientable $\zet_{mn}$-distance magic. 

\begin{thm}
A graph $G = K_{m} \circ \overline{K_{n}}$ is orientable $\mathbb{Z}_{mn}$-distance magic if and only if $n \not \equiv 1 \pmod{2}$ or $m \not \equiv 2 \pmod{4}$ for $n \geq 2$. When $n = 1$, $G$ is orientable $\mathbb{Z}_{mn}$-distance magic if and only if $m$ is odd.
\end{thm}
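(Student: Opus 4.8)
The plan is to split the argument according to the parity of the order $N = mn$, handling the two implication directions separately and isolating the genuinely hard construction.

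For \emph{necessity}, suppose $n$ is odd, $m \equiv 2 \pmod 4$ and $n \geq 2$. Then $G = K_m \circ \overline{K_n}$ is regular of degree $r = (m-1)n$, which is odd, while its order satisfies $mn \equiv 2 \pmod 4$; Theorem~\ref{gr:odd1} therefore forbids an orientable $\mathbb{Z}_{mn}$-distance magic labeling. The boundary case $n = 1$ is $G = K_m$, for which Theorem~\ref{complete} gives orientability exactly when $m$ is odd; this is why the disjunction $n \not\equiv 1 \pmod 2$ or $m \not\equiv 2 \pmod 4$ must be replaced by ``$m$ odd'' there, since otherwise it would wrongly admit $K_m$ with $m \equiv 0 \pmod 4$.

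For \emph{sufficiency}, the case $m,n$ both odd with $n \geq 2$ is immediate: then $G \cong K_{n,\ldots,n}$ has all parts of size $n \geq 2$, so $n_2 \geq 2$ and Theorem~\ref{cff} applies. It remains to build labelings when $N = mn$ is even, and here I would use the fixed-point-free translation involution $\phi(g) = g + \tfrac{mn}{2}$, which partitions $\mathbb{Z}_{mn}$ into $\tfrac{mn}{2}$ pairs $\{g,\, g+\tfrac{mn}{2}\}$. When $n$ is even, choose the labeling so that each part $V^i$ is a union of $n/2$ such pairs. For a vertex $x$ adjacent to a pair $\{g,\,g+\tfrac{mn}{2}\}$, orienting its two edges oppositely, i.e.\ $\epsilon_{xg} = -\epsilon_{x,\,g+mn/2}$, makes that pair contribute $g - (g+\tfrac{mn}{2}) = \tfrac{mn}{2}$ to $w(x)$; since every edge joins two distinct $\phi$-pairs, these opposite orientations can be chosen consistently and independently for each pair of pairs, so every vertex receives $\tfrac{mn}{2}$ from each of the $\tfrac{(m-1)n}{2}$ neighbouring pairs. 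Hence $w(x) = \bigl(\tfrac{(m-1)n}{2} \bmod 2\bigr)\cdot \tfrac{mn}{2}$ is a constant $\mu$ for all $x$, and this settles every $m$ with $n$ even, including $m \equiv 2 \pmod 4$.

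The remaining case, $n$ odd and $m \equiv 0 \pmod 4$, is the heart of the theorem and the source of the new infinite family of odd-regular orientable graphs (here $r=(m-1)n$ is odd). Since the parts now have odd size, the $\phi$-pairs cannot tile a single part: within each part I would pair off $n-1$ of its elements into within-part $\phi$-pairs and leave one ``special'' vertex, giving $m$ specials in total, which I then pair across parts into $\tfrac{m}{2}$ straddling $\phi$-pairs. The within-part pairs contribute the uniform $\tfrac{mn}{2}$ exactly as above; the obstruction is that a straddling pair $\{g,\,g+\tfrac{mn}{2}\}$ with $g \in V^i$ and $g+\tfrac{mn}{2}\in V^j$ is only half-seen by the vertices of $V^i$ and of $V^j$, leaving uncancelled terms $\pm(g+\tfrac{mn}{2})$ on $V^i$ and $\pm g$ on $V^j$. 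The crux is to select the specials and orient the straddling edges so that these leftovers aggregate to the same group element at every vertex; this is where I would invoke Theorem~\ref{set} to partition the relevant symmetric block of labels into zero-sum subsets indexed so that the residual contributions cancel in groups, using that $\tfrac{m}{2}$ is even. Checking that this zero-sum decomposition can be realized simultaneously with a globally consistent orientation of the straddling edges — and that the count $\tfrac{m}{2}$ being odd is precisely what breaks the bookkeeping when $m \equiv 2 \pmod 4$, matching Theorem~\ref{gr:odd1} — is the step I expect to be the main technical hurdle.
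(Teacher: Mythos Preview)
Your treatment of necessity and of the easy sufficiency cases ($n=1$ via Theorem~\ref{complete}, $mn$ odd via Theorem~\ref{cff}, and the excluded case via Theorem~\ref{gr:odd1}) matches the paper.

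For $n$ even your $\phi$-pair construction is correct and genuinely different from the paper. The paper never exploits the involution $g\mapsto g+\tfrac{mn}{2}$ at the level of orientations; instead it treats $mn\equiv 0\pmod 4$ via Zeng's zero-sum theorem and handles $m$ odd, $n\equiv 2\pmod 4$ by a separate explicit labeling. Your single argument covers all even $n$ at once, is shorter, and avoids Theorem~\ref{set} in this range.

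The genuine gap is the case $n$ odd, $m\equiv 0\pmod 4$, and your straddling-pair scheme cannot be completed as sketched. With one special $g_k$ per part and $\sigma$ the pairing involution (so $g_{\sigma(k)}=g_k+\tfrac{mn}{2}$), the leftover at a vertex $x\in V^k$ is $\epsilon_{x}\,g_{\sigma(k)}$; for this to be constant on $V^k$ you must orient all edges from $V^k$ to $g_{\sigma(k)}$ the same way, say with sign $\epsilon_k$. But the single edge $g_kg_{\sigma(k)}$ then forces $\epsilon_{\sigma(k)}=-\epsilon_k$, so the leftovers on the two paired parts are $\epsilon_k g_{\sigma(k)}$ and $-\epsilon_k g_k$, which agree only when $2g_k=\tfrac{mn}{2}$, i.e.\ only for the unique pair $\{\tfrac{mn}{4},\tfrac{3mn}{4}\}$. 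Since $m\geq 4$ there are at least two straddling pairs, and each vertex sees exactly one half-pair, so there is no ``aggregation'' for Theorem~\ref{set} to act on: the residual is a single term that already varies with $k$.

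The paper's construction here follows a different idea that you are missing. It abandons the involution viewpoint and applies Theorem~\ref{set} \emph{to the labels of the parts themselves}: partition $\mathbb{Z}_{mn}\setminus\{0,\tfrac{mn}{2}\}$ into zero-sum blocks of sizes $n-1,n-1,n,\ldots,n$, then adjoin $\tfrac{mn}{2}$ and $0$ to the two short blocks. Every part now has label-sum $0$ except $V^1$, whose sum is $\tfrac{mn}{2}$. Orienting all edges between two parts uniformly (all one way) makes that pair contribute $\pm(\text{part sum})$, hence $0$ from every part but $V^1$; one local adjustment of the orientation between $V^1$ and the part containing $\tfrac{mn}{4}$ then forces every weight to equal $\tfrac{mn}{2}$. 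The missing insight is to control part sums via Zeng and use block-uniform orientations, rather than to cancel labels pairwise.
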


\begin{proof}
By Theorem \ref{complete} we can assume that $n>1$. If $mn$ is odd, then we are done by Theorem \ref{cff}. Moreover from Theorem \ref{gr:odd1} we can conclude that for $n \equiv 1 \pmod{2}$ and $m \equiv 2 \pmod{4}$ there does not exist an orientable $\mathbb{Z}_{mn}$-distance magic labeling for the graph $G$. We consider two cases:

\textit{ Case 1: } $mn \equiv 0\pmod4$.\\
Let $A = \{-\frac{mn}{2}+1,  -\frac{mn}{2}+2, \ldots, -1,1, \ldots, \frac{mn}{2}-2, \frac{mn}{2}-1 \} = \mathbb{Z}_{mn} \setminus \{0, \frac{mn}{2} \}$.

If $n=2$, then let  $A^1=\{0,\frac{mn}{2} \}$ and there exists a zero-sum partition $A^2,A^3,\ldots,A^m$  of the set $A$ such that  $|A^i| =2$ for every $2 \leq i \leq m$ by Theorem \ref{set}. Let $q$ be the index of subset containing $\frac{mn}{4}$.

If $n>2$, then by Theorem \ref{set} there exists a partition of $A$ into $A_{1}', A_{2}', \ldots, A_{m}'$ such that $|A_{1}'|=|A_{2}'|=n-1$, $|A_{i}'|=n$ for $i = 3,4, \ldots, m$ and $\sum_{a \in A_{i}'} a = 0 $ for $i=1,2, \ldots, m$. Without loss of generality $\frac{mn}{4} \in A_{q}$, where $q \neq 1$.  Let $A^{1} = A_{1}' \cup \{ \frac{mn}{2}\}$, $A^{2}=A_{2}' \cup \{0\}$, $A^{i} = A_{i}'$, where $i=3,4, \ldots, m$.

Note that in both situations $\sum_{a \in A^{i}}a \equiv 0 \pmod{mn}$ for $i=2,3, \ldots, m$. Label vertices from each set $V^{i}$ by the elements of $A^{i}$ with the restriction that $\vec{l}(v_{1}^{1}) = \frac{mn}{2}$ and $\vec{l}(v_{1}^{q})= \frac{mn}{4}$.

Let $o(uv)$ be the orientation for the edge $uv$. For edges $v^{1}_{i}v^{q}_{j}$ from $E(G)$ we have
\begin{equation}
o(v^{1}_{i}v^{q}_{j})= \begin{cases} \overrightarrow{v^{1}_{i}v^{q}_{j}}, \quad i \in \{1,2, \ldots, n \}, j=1 \\
 							 \overrightarrow{v^{q}_{j}v^{1}_{i}}, \quad  i \in \{1,2, \ldots, n \}, j \in \{2,3, \ldots, n \}. \end{cases} 
\end{equation}
This way we obtained the edge orientation between $V^{1}$ and $V^{q}$. For the remaining pairs of partition vertex sets $V^{k}$ and $V^{l}$ we demand all edges to be oriented the same from the $k$-th set to the $l$-th set or conversely. One can check that $w(v) \equiv \frac{mn}{2} \pmod{mn}$ for any $v \in V(G)$. Thus $G$ is orientable $\mathbb{Z}_{mn}$-distance magic.

\textit{ Case 2:} $m \equiv 1\pmod4$ and $n \equiv 2\pmod4$.\\
For the set $V^{k}$ we introduce the following orientation 
$$o(v_{i}^{k}v_{j}^{l}) = \overrightarrow{v_{j}^{l}v_{i}^{k}}$$
for all $l \in \{k - \frac{m-1}{2}, k - \frac{m-1}{2} +1, \ldots, k-1\}$ (where operations are taken modulo $m$). For the remaining edges we have
$$o(v_{i}^{k}v_{j}^{l}) = \overrightarrow{v_{i}^{k}v_{j}^{l}}.$$

We say that $V^l$ \textit{preceds} $V^k$ and $V^k$ \textit{succeeds} $V^l$ if arcs between vertices of $V^l$ and $V^k$ have tails in $V^l$ and heads in $V^k$. Define the labeling $\vec{l}$ such that 
$\vec{l}(v_{i}^{k}) = (i-1) + (k-1) n$, where $i \in \{1,2, \ldots, n \}$ and $k \in \{1,2, \ldots, m\}$. It is easy to see that for each $k \in \{1, 2, ..., m\}, i \in \{1,2,...,n\}$ we have

$$  w(v_i^k) = \sum_{\substack{v \in V^{l}: \\ V^{l} \  preceds \  V^{k}}} \vec{l}(v) - \sum_{\substack{v \in V^{l}: \\ V^{l}\  succeeds \  V^{k}}}\vec{l}(v) = \frac{m-1}{2} n d \pmod{mn},$$
where $d$ is a constant difference between labels $a_{i}^{j} \in A^{j}$ and $a_{i}^{j'} \in A^{j'}$ where \linebreak $j' = j + \frac{m+1}{2} \pmod{m}$. Therefore $d = n \frac{m+1}{2}$. We obtain the magic constant $\mu \equiv \frac{m-1}{2} \frac{m+1}{2} n^2 \pmod{mn}$.
\end{proof}

Observe that if $G$ is an odd regular graph, then  the lexicographic product $G\circ \overline{K}_{2n+1}$ is also an odd regular graph. From the above Theorem 5 we obtain the following observation showing that there exist infinitely many odd regular graphs that are orientable $\Gamma$-distance magic for a cyclic group $\Gamma$.

\begin{observation}
The lexicographic product $K_{4m}\circ \overline{K}_{2n+1}$ has a $\zet_{4m(2n+1)}$-distance magic labeling for any $m,n\geq1$.
\end{observation}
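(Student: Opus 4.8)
The plan is to deduce this immediately from the preceding theorem by matching its parameters. Setting $M = 4m$ and $N = 2n+1$, the graph under consideration is $K_M \circ \overline{K_N}$, which is precisely the family treated by that theorem with the roles of its $m$ and $n$ played by $M$ and $N$ respectively. Since $N = 2n+1 \geq 3$, we are in the regime $N \geq 2$, so the governing criterion is the disjunction ``$N \not\equiv 1 \pmod 2$ or $M \not\equiv 2 \pmod 4$''.

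First I would verify the second disjunct. Because $M = 4m \equiv 0 \pmod 4$, we have $M \not\equiv 2 \pmod 4$, so the disjunction holds outright; the parity of $N$ is then irrelevant, although here $N$ is odd and so the first disjunct fails. The preceding theorem therefore guarantees that $K_M \circ \overline{K_N} = K_{4m} \circ \overline{K}_{2n+1}$ is orientable $\zet_{MN}$-distance magic, and $MN = 4m(2n+1)$, which is exactly the asserted labeling.

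It is worth pinpointing which part of that theorem's proof supplies the construction: since $MN = 4m(2n+1) \equiv 0 \pmod 4$, the pair $(M,N)$ lands in Case 1, so the explicit arc orientation together with the zero-sum partition of $\zet_{MN}\setminus\{0,\tfrac{MN}{2}\}$ furnished there (via Theorem \ref{set}) realizes the labeling. There is no genuine obstacle here: the statement is a direct corollary, and the only care needed is to confirm that the hypotheses place us in the $N \geq 2$ branch and, within it, in Case 1 rather than Case 2 (Case 2 is excluded since $M \equiv 0$, not $1$, modulo $4$).
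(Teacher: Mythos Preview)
Your proposal is correct and matches the paper's own treatment: the observation is stated there as an immediate consequence of Theorem~5, and you have simply spelled out the verification that $M=4m$, $N=2n+1$ satisfy its hypotheses (with $N\geq 2$ and $M\not\equiv 2\pmod 4$), landing in Case~1.
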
 

Note that the method presented above works also for other families of graphs, for instance for $K_{1,4m+3}\circ  \overline{K}_{n}$. We just assign the label $\frac{4(m+1)n}{2}$ to some vertex in the center of the $K_{1,4m+3}\circ  \overline{K}_{n}$ and place $\frac{4(m+1)n}{4}$ in the other set. The orientation in $K_{1,4m+3}\circ  \overline{K}_{n}$ is similar to the general case.

\section{Join graphs}\label{join}
\begin{thm}
The join graph $G=P_{n-1}+K_1$ is orientable $\mathbb{Z}_{n}$-distance magic if and only if $n \not \equiv 2 \pmod 4$.
\end{thm}
\begin{proof}
Let $v_1, v_2, \ldots, v_{n-1}$ be consecutive vertices belonging to $P_{n-1}$, and let $v_n$ be the remaining vertex of $G$. 

First we prove that when $n \equiv 2 \pmod{4}$ $G$ is not orientable $\mathbb{Z}_n$-distance magic. 
We proceed by contradiction and assume that $G$ is orientable $\mathbb{Z}_n$-distance magic. Let $\vec{l}$ be some orientable $\mathbb{Z}_n$-distance  magic labeling of $G$ and let $\mu$ be the magic constant. Because the group $\mathbb{Z}_n$ has an even order, it makes sense to speak about parity. Observe that $\mu \pmod{2} \equiv w(v_n) \pmod{2} \equiv (\vec{l}(v_1) + \ldots + \vec{l}(v_{n-1})) \pmod{2} \equiv \left(\sum_{z \in \mathbb{Z}_{n}}z - \vec{l}(v_n)\right) \pmod{2} \equiv (1 + \vec{l}(v_n)) \pmod{2}$. Therefore the parity of $\vec{l}(v_n)$ is always opposite to the parity of $\mu$. 

Since $w(v_1) \pmod{2} \equiv (\vec{l}(v_n) + \vec{l}(v_2)) \pmod{2} \equiv (\vec{l}(v_n)+1) \pmod{2}$ we get $\vec{l}(v_2) \equiv 1 \pmod{2}$. Moreover $w(v_3) \equiv (\vec{l}(v_2)+\vec{l}(v_4)+\vec{l}(v_n)) \pmod{2}$ which implies $\vec{l}(v_4) \equiv 0 \pmod{2}$,
and in general $\vec{l}(v_j) \equiv 1 \pmod{2}$ \linebreak for $j \equiv 2\pmod{4}$, and $\vec{l}(v_j) \equiv 0 \pmod{2}$ for $j \equiv 0 \pmod{4}$. On the other hand
$w(v_{n-1}) \equiv (\vec{l}(v_n-2)+\vec{l}(v_n)) \pmod{2}$, so $\vec{l}(v_{n-2}) \equiv 1 \pmod{2}$, but $n-2 \equiv 0 \pmod{4}$.  
Contradiction. In other cases we show that $G$ is orientable $\mathbb{Z}_n$-distance magic.

We set the following labeling. When $n$ is odd we assign $\vec{l}(v_i) = i$  for $i = 1, \ldots, n-1$  and $\vec{l}(v_n) = 0$. 
Now we set orientation.
$$
	o(v_{i}v_{i+1}) = \overrightarrow{v_{i+1}v_{i}}
$$
and 
$$
	o(v_{i}v_{n}) = \overrightarrow{v_{i}v_{n}},
$$
for $i = 1, \ldots, n-2$.
And finally $o(v_{n-1}v_{n}) = \overrightarrow{v_{n}v_{n-1}}$.

Observe that when $i = 2, \ldots, n-2$ we get $w(v_i) = i+1 -(i-1) - 0 = 2$.
Next, $w(v_1) = 2 - 0 = 2$, $w(v_{n-1}) = -(n-2) + 0 = 2$, and finally $w(v_n) = 1 + 2 + \ldots + (n-2) - (n-1) = 1 -n + 1 = 2$.
This way we obtain magic constant $\mu = 2$.

When $n \equiv 0 \pmod{4}$ we change the orientation of two arcs: $v_nv_{n-1}$ to $v_{n-1}v_n$
and $v_sv_n$ to $v_nv_s$, where $s = \frac{n}{4} - 1$. We set the same labeling as in the previous case. Therefore $w(v_i)$, where $i \in \{1, \ldots, n-1\} \setminus \{s\}$, can be calculated similarly. Next, $w(v_s) = s+1 - (s-1) + 0 = 2$, and 
$
	w(v_n) = 1 + 2 + \ldots + (s-1) -s + (s+1) + \ldots + (n-1) = 1 + \ldots + (n-1) - 2s = \frac{n}{2} - 2(\frac{n}{4} - 1) = 2.
$
And we get the magic labeling with $\mu = 2$.
\end{proof}

\section{Prism graphs}

 In this section we present some odd regular graphs that are not orientable distance magic. The \textit{prism} is a Cartesian product $P_{2} {\displaystyle \square } C_{n}$.

\begin{thm}
Let us consider a \textit{prism graph} $G$ of order $2n$. There does not exist an orientable $\zet_{2n}$-distance magic labeling.
\end{thm}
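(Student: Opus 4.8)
The plan is to argue by contradiction, and the key device is to reduce the entire problem modulo $2$, which annihilates the orientation. Write the two $n$-cycles of the prism $G=P_2\square C_n$ as $u_0,\dots,u_{n-1}$ and $w_0,\dots,w_{n-1}$, with rungs $u_iw_i$ (indices mod $n$). Suppose an orientable $\mathbb{Z}_{2n}$-distance magic labeling $\vec l$ exists with magic constant $\mu$. Applying the homomorphism $\mathbb{Z}_{2n}\to\mathbb{Z}_2$ to the condition $w(x)=\mu$ turns every sign $\pm1$ into $1$, so each weight collapses to $\sum_{y\sim x}\vec l(y)\equiv\mu\pmod 2$. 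Hence, letting $S\subseteq V(G)$ be the set of vertices carrying an odd label, we have $|S|=n$ (a bijection onto $\mathbb{Z}_{2n}$ hits exactly $n$ odd values) and every vertex has the same parity $c:=\mu\bmod 2$ of neighbours in $S$. Summing this local condition over all $2n$ vertices gives $3|S|\equiv 2nc\equiv0\pmod 2$, so $|S|$ must be even; as $|S|=n$, the case $n$ odd is already impossible (equivalently, Theorem \ref{gr:odd1} applies since $2n\equiv2\pmod4$). It remains to exclude $n$ even.

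For $n$ even I encode $S$ by bits $\alpha_i,\beta_i\in\mathbb{F}_2$, with $\alpha_i=1$ iff $u_i\in S$ and $\beta_i=1$ iff $w_i\in S$. The constant-parity condition at $u_i$ and at $w_i$ reads $\alpha_{i-1}+\alpha_{i+1}+\beta_i=c$ and $\beta_{i-1}+\beta_{i+1}+\alpha_i=c$ for all $i$. First I treat the homogeneous system ($c=0$): the first family gives $\beta_i=\alpha_{i-1}+\alpha_{i+1}$, and substituting into the second yields the second-order recurrence $\alpha_{i-2}+\alpha_i+\alpha_{i+2}=0$ over $\mathbb{F}_2$. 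Because $n$ is even this recurrence decouples along even and odd indices into two independent cyclic Fibonacci recurrences of length $n/2$; since the $2\times2$ companion matrix of the recurrence has order $3$ over $\mathbb{F}_2$, each such cyclic recurrence admits a nonzero solution exactly when $3\mid n/2$, in which case the solutions are precisely the cyclic shifts of the repeating block $(1,1,0)$. The inhomogeneous case $c=1$ differs only by the particular solution $S=V(G)$ (the all-ones vector), so every solution of the $\mathbb{F}_2$ system is either a homogeneous solution or its complement.

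Finally I would compute the Hamming weight $|S|=\operatorname{wt}(\alpha)+\operatorname{wt}(\beta)$ of each solution. When $6\nmid n$ only the trivial homogeneous solution survives, forcing $|S|\in\{0,2n\}$. When $6\mid n$, each nonzero solution (a shift of the repeating block $(1,1,0)$) has weight $n/3$ on its index class, and the relation $\beta_i=\alpha_{i-1}+\alpha_{i+1}$ sends a nonzero even- or odd-index part of $\alpha$ to a nonzero part of $\beta$ of the same weight $n/3$; tallying the four cases according to whether each of $\alpha$'s two index parts is zero gives $|S|\in\{0,\tfrac{2n}{3},\tfrac{4n}{3},2n\}$, with the same set arising for the complements. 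In every case $|S|\neq n$, contradicting $|S|=n$ and completing the proof. The main obstacle is this last step: I must solve the $\mathbb{F}_2$ recurrence exactly and carry out the weight bookkeeping with care, in particular verifying that the passage $\alpha\mapsto\beta$ neither creates nor cancels weight unexpectedly, so that $n$ is provably excluded from the list of attainable weights.
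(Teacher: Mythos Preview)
Your proof is correct and shares the paper's key device---reducing the weight condition modulo $2$ so that the orientation disappears---but the two arguments run along different decompositions of the prism. The paper uses the bipartition available when $n$ is even: relabeling so that each vertex of one part $W$ sees three consecutive vertices $u_i,u_{i+1},u_{i+2}$ of the other part $U$, comparing adjacent weight conditions gives $\vec l(u_i)\equiv\vec l(u_{i+3})\pmod 2$ in one line, and the remainder is a short case split on the period-$3$ parity pattern of $U$ together with the count of odd labels this forces on $W$. You instead keep the two $n$-cycles, write the coupled $\mathbb{F}_2$-system in $(\alpha,\beta)$, eliminate $\beta$ to reach the step-$2$ recurrence $\alpha_{i-2}+\alpha_i+\alpha_{i+2}=c$, and split by index parity to recover the same period-$3$ structure via linear algebra. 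Your route is more systematic and yields the admissible set of weights $\{0,\,2n/3,\,4n/3,\,2n\}$ mechanically; the paper's route reaches the period-$3$ constraint faster but finishes with a somewhat informal case analysis.

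Regarding the obstacle you flag: for $c=0$ the relation $\beta_i=\alpha_{i-1}+\alpha_{i+1}$ becomes, on each index-parity class of length $n/2$, the map $b_j=a_{j-1}+a_j$. Its kernel consists of constant sequences, and the only constant solution of $a_{j-1}+a_j+a_{j+1}=0$ over $\mathbb{F}_2$ is the zero sequence; hence the three nonzero solutions---each of weight $n/3$---are permuted among themselves, no weight is created or cancelled, and $|S|=n$ is indeed excluded exactly as you outlined.
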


\begin{proof}
If $|G|=|P_{2} {\displaystyle \square } C_{n}| \equiv 2 \pmod{4}$ the thesis is fulfilled on the basis of Theorem 1. We are going to consider situation when $|G| \equiv 0 \pmod{4}$. Suppose that there exists an oriented $\mathbb{Z}_{2n}$-distance magic labeling for a graph $G$. Since $G$ is bipartite graph we can assume that it has partition sets $U= \{u_{1}, u_{2}, \ldots, u_{2k} \}$ and $W = \{w_{1}, w_{2}, \ldots ,w_{2k} \}$.

\begin{center}
\begin{tikzpicture}
 \fill (0,0) circle(2pt) node[below right] {$w_{1}$};
 \fill (0,1) circle(2pt) node[above right] {$u_{1}$};
 \fill (1,1) circle(2pt) node[above right] {$u_{2}$};
 \fill (1,0) circle(2pt) node[below right] {$w_{2}$};
 \fill (2,1) circle(2pt) node[above right] {$u_{3}$};
 \fill (2,0) circle(2pt) node[below right] {$w_{3}$};
 \fill (4,0) circle(2pt) node[below right] {$w_{2k-1}$};
 \fill (4,1) circle(2pt) node[above right] {$u_{2k-1}$};
 \fill (5,0) circle(2pt) node[below right] {$w_{2k}$};
 \fill (5,1) circle(2pt) node[above right] {$u_{2k}$};
 \draw (0,1)--(1,0)--(1,1)--(2,0)--(2,1)--(1,0);
 \draw (0,0)--(1,1)--(1,0)--(1,1);
 \draw (0,0)--(0,1); \draw[dotted] (2,1)--(2.25,0.75); \draw[dotted] (2,0)--(2.25,0.25);
 \draw[dotted] (4,0)--(3.75,0.25); \draw[dotted] (4,1)--(3.75,0.75); \draw (4,0)--(4,1);
 \draw (4,0)--(5,1)--(5,0)--(4,1);
 \draw (0,0) to[bend left] (5,1);
 \draw (5,0) to[bend left] (0,1);

\end{tikzpicture}
\end{center}

As in Section 3 since $\mathbb{Z}_{2n}$ has an even order, it makes sense to speak about even and odd elements. Let us focus on the parity of the magic constant. There is no need to consider direction of edges because addition and subtraction modulo $2$ give the same results. If we know the parity of three consecutive labels $\vec{l}(u_{i-1})$, $\vec{l}(u_{i}), \vec{l}(u_{i+1})$, then we can say what is the parity of the element $\vec{l}(u_{i+2})$. The parity of the magic constant generated by three consecutive labels needs to be preserved, which means:
\begin{center}
$(\vec{l}(u_{i}) + \vec{l}(u_{i+1}) + \vec{l}(u_{i+2})) \equiv (\vec{l}(u_{i+1}) + \vec{l}(u_{i+2})+\vec{l}(u_{i+3})) \pmod{2}$
\end{center}
Therefore, $\vec{l}(u_{i}) \equiv \vec{l}(u_{i+3}) \pmod{2}$ for any $i \in \{1,2,\ldots,2k\}$.

Hence knowing the parity of three initial labels one can establish parity of the remaining labels. We examine two possibilities, according to the cardinality of $U$. 
\\Suppose first, that $|U| \not \equiv 0 \pmod{3}$. One can check that $\vec{l}(x) \equiv \vec{l}(y) \pmod{2}$ for any $x,y$ from the same partition set. 
Thus, \linebreak $\vec{l}(u) \equiv 1+\vec{l}(w) \pmod{2}$ for $u \in U$ and $w \in W$. Because the graph is odd regular the parity of the magic constant depends on the partition set, a contradiction.
\\
We assume now that $|U| \equiv 0 \pmod{3}$, then we have to examine every three initial labels generating the rest of the sequence. For labels of the same parity we have contradiction that is compatible with the description above. If one of the three initial $\vec{l}(u_{1}),\vec{l}(u_{2}),\vec{l}(u_{3})$ is an even number and the rest are odd numbers then in the whole $U$ component we have $\frac{1}{3}$ of all even numbers and $\frac{2}{3}$ of odd numbers. They generate even magic constant. On the other hand, in $W$ component we have $\frac{1}{3}$ of all odd numbers and $\frac{2}{3}$ of all even numbers (the rest of remaining labels). This does not allow a labeling that generates even magic constant because vertices from $W$ also should meet the rule of the same parity on every third element of the sequence. Therefore, there also should be $\frac{1}{3}$ of all even numbers and $\frac{2}{3}$ of odd numbers or all labels were even. That situation is not possible. 
\\
The case looks similar in the scheme with one odd number and two even numbers in initial three $\vec{l}(u_{1}),\vec{l}(u_{2}),\vec{l}(u_{3})$. Above consideration exhausts other possible cases and therefore proves the rightness of  the formula.  

\end{proof}

\section*{Acknowledgements}
We would like to thank Sylwia Cichacz for her support, encouragement, assistance in proofreading this reasearch and delivering valuable tips and resources. We could not have imagined having a better advisor and mentor. We are also very grateful to Dominika Dato\'n, Kinga Patera, Natalia Pondel, Maciej Gabry\'s and Przemysław Zietek from \textit{"Snark" Research Student Association} for their help and involvement in initial phase of our analysis.


\begin{thebibliography}{99}

\bibitem{CicFreFro}  S. Cichacz, B. Freyberg, D. Froncek, \emph{Orientable $\zet_n$-distance magic graphs}, Discussiones Mathematicae - Graph Theory (2017), accepted. 

\bibitem{FreKer}
B. Freyberg, M. Keranen,\textit{ Orientable  $\mathbb{Z}{}_{n}$-distance magic labeling of Cartesian
products of two cycles}, Australasian Journal of Combinatorics 69(2)
(2017), 222--235.

\bibitem{FreKer2} 
B. Freyberg, M. Keranen, \textit{Orientable $\mathbb{Z}{}_{n}$-distance
magic graphs via products}, Australasian Journal of Combinatorics 70(3) (2018), 319--328.

\bibitem{HIK} R.~Hammack, W.~Imrich, S.~Klav\v{z}ar, \emph{Handbook of Product Graphs, Second Edition}, CRC Press, Boca Raton, FL, 2011.

\bibitem{Har}
F. Harary, \textit{Graph Theory}, p. 22. Addison-Wesley, Reading (1994).

\bibitem{HarRin}
 N. Hartsfield, G. Ringel, \textit{Pearls in Graph Theory}, Academic Press, Boston, 1990.

\bibitem{ref_HefMutSch}
D. Hefetz,  T. M\"{u}tze,  J. Schwartz, \textit{On antimagic directed graphs}, J. Graph Theory, 64(3) (2010), 219--232.

\bibitem{Zeng}
X. Zeng,  \textit{On zero-sum partitions of abelian groups}. Integers 15 (2015), Paper No. A44, 16 pp.



\end{thebibliography}
\end{document}